\newtheorem{problem}{Problem}[section]
\newtheorem{mydef}{Definition}[section]
\newtheorem{mytheo}{Theorem}[section]
\newtheorem{myrem}{Remark}[section]
\newtheorem{mylem}{Lemma}[section]
\begin{document}

\begin{frontmatter}
\title{A novel semi-analytical multiple invariants-preserving integrator for conservative PDEs}
\author[Shi]{Wei Shi}\ead{shuier628@163.com}
\author[Wang]{Bin Wang}\ead{wangbinmaths@xjtu.edu.cn}
\author[Liu]{Kai~Liu\corref{cor1}}\ead{laukai520@163.com}

\address[Shi]{School of Physical and Mathematical Sciences, Nanjing Tech University, Nanjing 211816, P.R.China}
\address[Wang]{School of Mathematics and Statistics, Xi'an Jiaotong University, 710049 Xi'an, P.R. China}
\address[Liu]{Department of Mathematics, Nanjing Audit University, 211815 Nanjing, P.R. China}
 \cortext[cor1]{Corresponding author}

\begin{abstract}
Many conservative partial differential  equations such as  the  Korteweg-de Vries (KdV) equation, and the nonlinear Schr\"{o}dinger equations, the Klein-Gordon equation have more than one invariant functionals. In this paper, we propose the definition of the discrete variational derivative, based on which,  a novel semi-analytical multiple invariants-preserving integrator for the conservative partial differential  equations is constructed  by projection technique. The proposed integrators are shown to have the same order of accuracy as the underlying integrators. For applications, some concrete mass-momentum-energy-preserving integrators are derived for the KdV equation.
\end{abstract}

\begin{keyword}
conservative partial differential  equation; invariants-preserving; discrete variational derivative; projection;  Korteweg-de Vries  equation
\end{keyword}
\end{frontmatter}
%\noindent\textbf{PACS}: 02.60.Jh; 02.70.Bf
\par\noindent {\bf 2010 Mathematics Subject Classification.} 65L05, 65L07, 65L20, 65P10, 34C15.\vskip0.5cm \pagestyle{myheadings}
\thispagestyle{plain} \markboth{}{}

\section{Introduction}
It is known that   partial differential equation (PDE) plays an important role in science and engineering. It can describe many phenomena in physics, engineering, chemistry, other sciences.  Much attention has been paid to investigate the  analytical solutions of  partial differential equations \cite{demiray2005, halim2004, Gegenhasi2007, JLzhang2003}. The investigation on analytical solutions  can offer the  physicists and engineers a powerful tool to examine the feasibility of the model by adjusting some physical parameters, and give good enough support to numerical simulation. However, generally speaking, the analytical solutions of PDEs are not obtainable. Hence, the development of numerical integrators for PDEs is demanded.

Many PDEs such as the KdV equation, and the nonlinear Schr\"{o}dinger equations, the Klein-Gordon equation can be expressed in nonlinear Hamiltonian form. An important feature of Hamiltonian systems is that they admit conservation law structures which  are fundamental to the derivation of analytical solutions, the analysis of the qualitative behaviours, and the numerical discretization of the systems. It has become  common practice that
numerical integrators should be designed to retain the  conservation law structures or other geometric structures, which will be more preferable when studying the long-time behaviour of dynamical systems. Such numerical integrators are usually called geometric or
structure-preserving. We refer the reader to
\cite{hairer2002,McLachlan2002,Sanz-Serna1992} for recent surveys of
this research.  In this paper, we  focus ourselves on the energy/invariants-preserving integrator, which is a typical branch of structure-preserving integrators.
For  ordinary differential equations (ODEs),  a variety of  invariant-preserving integrators, such as the continuous-stage Runge-Kutta(-Nystr\"{o}m) (RK(N))
integrators \cite{Hairer2010EnergyPreservingVO}, the discrete gradient integrators \cite{Quispel1996-1,cie2011,dahlby2011,celledoni2012}, the Hamiltonian boundary value methods \cite{Brugnano2013,Brugnano2011,Brugnano2009}, have been developed in
 relatively general frameworks. In comparison, the construction of the  invariant-preserving integrators
 for PDEs seems more complicated since PDEs are a huge and motley collection of
problems and a case-by-case discussion is required to devise  the invariant-preserving integrators for  each partial differential equation under consideration
  (see, e.g. \cite{celledoni2012,chen2001,cai2013}). Some progress has been made to give
a fairly general framework to develop invariant-preserving integrators for PDEs.
In \cite{celledoni2012}, by using the method of lines and   the average vector
field method, a systematic procedure to construct invariant-preserving schemes for evolutionary PDEs is developed. By reformulating  the PDEs into multi-symplectic Hamiltonian forms, many enery-preserving or multi-symplectic methods are
derived (see, e.g.,
\cite{chen2011,gong2014,bridges2006,hongjl2006}). Furihata, Matsuo et al. presented the concept of discrete variational derivatives, based on which, finite-difference schemes that inherit energy conservation property are derived for PDEs \cite{furihata1999,matsuo2001,furihata2001}. In \cite{dahlby2011-2}, a general procedure for constructing linearly implicit conservative numerical integrators for PDEs is presented.  All the  procedures mentioned above require  the semidiscretization of the PDEs in space.

In this paper, we  consider invariant-preserving integrators for PDEs in a semi-analytical framework. To be more precise, we focus on  time-stepping numerical integrators and  do not require the PDEs to be discretised in spatial direction. The benefit is that it does not depend on the number of  independent variables and the order of derivatives in space. Therefore, it is applicable to  a wider range of PDE models. We firstly extend the concept of discrete gradient for  gradient of a function to the variational derivative of a functional. Then, a semi-analytical discrete variational derivative integrator will be derived for the conservative PDEs. The variational derivative integrator preserves the energy exactly. Furthermore, multiple invariants-preserving integrators for conservative PDEs that have more than one conservation laws will also be constructed by using projection.
The outline of this paper is as follows. Some preliminaries  are
presented and the definition of the discrete variational derivative is proposed in Section \ref{sec:pre}.  In Section \ref{sec: DVDI}, the novel semi-analytical energy-preserving integrators and the multiple invariants-preserving integrators are constructed based on the discrete variational derivative and projection.  Some properties of the proposed integrators are discussed as well. For applications, some concrete invariants-preserving integrators are constructed for the KdV equation in Section \ref{sec:KdV}. The last section
focuses on some conclusions and discussions.

\section{Preliminaries and the discrete variational derivative } \label{sec:pre}
We consider nonlinear first-order conservative PDE with the Hamiltonian formulation
\begin{equation}
\left\{\begin{aligned}
&\dfrac{\partial u}{\partial t}=\mathcal{J}\dfrac{\delta \mathcal
{G}}{\delta u},\\
&u(t_0)=u_0,
\end{aligned}\right.
\label{waveq}%
\end{equation}
where $\mathcal{J}$ is a skew adjoint operator, the energy functional

\begin{equation}\label{rightg}
\mathcal {G}[u]=\int_{\Omega}G[u]dx, \ \ \ \Omega\subseteq
\mathbb{R}^d,
\end{equation}
and $u:\mathbb{R}^d\times[t_0,+\infty]\rightarrow \mathbb{R}$,
$dx=dx_1\cdots dx_d$.  We use the square brackets in \eqref{rightg}
to indicate that the energy  functional $\mathcal{G}$ and the  local energy  density $G$  depend on the function $u$ as well as
derivatives of  $u$ with respect to the independent variables
$x=(x_1,\cdots,x_d)$ up to  some degree $\nu$. The
variational derivative $\dfrac{\delta \mathcal {G}}{\delta u}$  is  defined  by the relation
 {
 \begin{equation}\label{variationder}
\int_{\Omega}\dfrac{\delta \mathcal {G}}{\delta u}
vdx=\dfrac{d}{d\epsilon}\rvert
_{\epsilon=0}\mathcal{G}[u+\epsilon v],
\end{equation}}
for any sufficiently smooth  function $v(x)$.

For the case of $d=1$, suppose
\begin{equation*}
\mathcal {G}[u]=\int_{\Omega}G\left(u,\frac{\partial u}{\partial
x},\ldots,\frac{\partial^\nu u}{\partial x^\nu}\right)dx.
\end{equation*}
Then it can be derived that
\begin{equation*}
\dfrac{\delta \mathcal {G}}{\delta u}=\dfrac{\partial G}{\partial
u}-\dfrac{\partial}{\partial x}\left(\dfrac{\partial G}{\partial
u_x}\right)+\dfrac{\partial}{\partial x^2}\left(\dfrac{\partial
G}{\partial u_{xx}}\right)+\cdots+(-1)^\nu\dfrac{\partial}{\partial
x^\nu}\left(\dfrac{\partial G}{\partial u^{(\nu)}}\right).
\end{equation*}
For the case of  $d\geqslant 2$, one can apply the the Euler operator to $G[u]$  to obtain the variational derivatives (see e.g. \cite{dahlby2011-2}
for details).

From now on,   the solution of \eqref{waveq}  is assumed to have sufficient regularity
and  the equipped boundary conditions on $\Omega$
 of \eqref{waveq} satisfies that the boundary terms vanish when calculating
 integration by parts (for example, periodic boundary conditions, zero Dirichlet boundary conditions). Furthermore, we denote $\mathcal{F}(\Omega)\subset L^2(\Omega)$ as the function space that the solution $u(\cdot,t)$ lies in.
By our assumption, the equation of the form \eqref{waveq} has in
common the energy conservation property
\begin{equation}
\dfrac{d}{d t}\mathcal {G}[u]=0.
\end{equation}
The key idea to construct  energy-preserving integrators for \eqref{waveq} is to introduce the concept of discrete variational derivative (DVD).

\begin{mydef}
The function $\frac{\delta \mathcal{G}}{\delta(u, v)}\in\mathcal{F}(\Omega)$ is a discrete variational derivative  of the functional $\mathcal{G}$ provided that  for  any   functions $u, v\in\mathcal{F}(\Omega)$, $u\neq v$, satisfying

\begin{equation}\label{dvd}
\left\{
\begin{aligned}
&\mathcal{G}[u]-\mathcal{G}[v] =\int_{\Omega} \frac{\delta \mathcal{G}}{\delta(u, v)}\cdot(u-v) \mathrm{d} x:=\left<\frac{\delta \mathcal{G}}{\delta(u, v)},u-v\right>, \\
 &\frac{\delta \mathcal{G}}{\delta(u, u)} =\frac{\delta \mathcal{G}}{\delta u},
\end{aligned}\right.
\end{equation}
where $<\cdot,\cdot>$ denotes the inner product of $L^2(\Omega)$:
$$<v,w>=\int_{\Omega}vwdx.$$
\end{mydef}
The discrete variational derivative can be regarded as a continuous generalization of discrete gradient for the gradient of a function.
The concept of discrete gradient leads to the discrete gradient methods for ordinary differential equations (ODEs).  We refer the reader to
\cite{gonz1996,Itoh1988,Quispel1996-1,Quispel1996-2,McLachlan1999,quispel2008,cie2011} for more research on this topic.

The simplest discrete variational derivative of  $\mathcal{G}$ is
\begin{equation}\label{dvd1}
\frac{\delta \mathcal{G}}{\delta(u, v)}=\dfrac{G[u]-G[v]}{u-v}.
\end{equation}

Similar argument as the average vector field (AVF), one of the frequently used discrete gradients,  yields the  AVF-type discrete variational derivative:

\begin{equation}\label{avfdvd}
\frac{\delta \mathcal{G}_{\mathrm{AVF}}}{\delta(u, v)}=\int_0^1 \frac{\delta \mathcal{G}}{\delta u}[\xi u+(1-\xi) v] \mathrm{d} \xi.
\end{equation}
As a matter of fact, we can verify that

$$
\begin{aligned}
\mathcal{G}[u]-\mathcal{G}[v]&=\int_0^1 \frac{\mathrm{d}}{\mathrm{d} \xi} \mathcal{G}[\xi u+(1-\xi) v] \mathrm{d} \xi\\
 & =\int_0^1\frac{\mathrm{d}}{\mathrm{d} \varepsilon}\rvert_{\varepsilon=0} \mathcal{G}[v+(\xi+\varepsilon)(u-v)]\mathrm{d}\xi \\
& =\int_0^1\int_{\Omega} \frac{\delta \mathcal{G}}{\delta u}[\xi u+(1-\xi) v]\cdot(u-v) \mathrm{d}x\mathrm{d}\xi\\
&=\int_{\Omega} \int_0^1\frac{\delta \mathcal{G}}{\delta u}[\xi u+(1-\xi) v]\mathrm{d}\xi\cdot(u-v) \mathrm{d}x.
\end{aligned}
$$
Therefore, \eqref{avfdvd} is indeed a discrete variational derivative of $\mathcal{G}[u]$.

\begin{myrem}
The concept of discrete variational derivative is different from the ''discrete variational derivative''  given in \cite{furihata1999}. It has appeared in \cite{dahlby2011-2}  but has not been  discussed under the analytical framework in details.
\end{myrem}
\section{The semi-analytical discrete variational derivative integrator for \eqref{waveq} }\label{sec: DVDI}
Based on the discrete variational derivatives, we can construct the semi-analytical discrete variational derivative integrator for \eqref{waveq}. The semi-analytical DVD integrator takes the form:
\begin{equation}\label{DG}
\dfrac{u^{k+1}(x)-u^{k}(x)}{\Delta t}=\mathcal{J}\frac{\delta \mathcal{G}}{\delta(u^{k+1}(x), u^{k}(x))}, k=0, 1,\ldots,
\end{equation}
where $u^k(x)$ is an approximation to the exact solution $u(x,t_k)$ at $t_k=t_0+k\Delta t$ which is obtained by  $k$ steps of a time-stepping numerical integrator.
Using discrete variational derivatives \eqref{dvd1} and \eqref{avfdvd} yields two concrete semi-analytical DVD integrators:
\begin{equation}\label{DG1}
\dfrac{u^{k+1}(x)-u^{k}(x)}{\Delta t}=\mathcal{J}\dfrac{G[u^{k+1}(x)]-G[u^{k}(x)]}{u^{k+1}(x)-u^{k}(x)}, k=0, 1,\ldots,
\end{equation}

\begin{equation}\label{DG2}
\dfrac{u^{k+1}(x)-u^{k}(x)}{\Delta t}=\mathcal{J}\int_0^1 \frac{\delta \mathcal{G}}{\delta u}[\xi u^{k+1}(x)+(1-\xi) u^{k}(x)] \mathrm{d} \xi, k=0, 1,\ldots,
\end{equation}
\begin{myrem}
Here, for simplicity, we assume that the skew skew adjoint operator $\mathcal{J}$ is independent of the function $u$. Otherwise, the discretization of the skew adjoint operator $\mathcal{J}[u]$ can be taken as $\mathcal{J}[\dfrac{u^{k+1}(x)+u^{k}(x)}{2}]$ in the  semi-analytical discrete variational derivative integrator \eqref{DG}.
\end{myrem}
Typically speaking, the conservative PDE  \eqref{waveq} may have more than one conserved functionals. Correspondingly, the PDE \eqref{waveq} has more than one Hamiltonian formulations. We may apply the semi-analytical DVD integrator to the particular Hamiltonian formulation corresponding to the functional we want to preserve. Unfortunately,
the semi-analytical DVD integrator  \eqref{DG} cannot preserve more than one conserved functionals at the same time in general. We will address this issue in what follows. Assume that the equation \eqref{waveq} possesses $n$ independent invariant functionals:
\begin{equation}\label{ninvariants}
\mathcal {H}_1[u]=\int_{\Omega}H_1[u]dx, \quad \mathcal {H}_2[u]=\int_{\Omega}H_2[u]dx,  \cdots, \mathcal {H}_n[u]=\int_{\Omega}H_n[u]dx.
\end{equation}
Our target is to construct a semi-analytical integrator that can preserve all the invariants  \eqref{ninvariants}. Due to the conservation of the invariants \eqref{ninvariants},
the solution of system \eqref{waveq} lies on the submanifold
$$M=\{u\in \mathcal{F}(\Omega)  : \mathcal {H}_1(u)=\mathcal {H}_1(u_0), \mathcal {H}_2(u)=\mathcal {H}_2(u_0), \cdots, \mathcal {H}_n(u)=\mathcal {H}_n(u_0)\}.$$
The tangent space $T_uM$ (\cite{dahlby2011}) of $M$ at $u$ is the orthogonal complement space to the linear space	
$$span\left\{\dfrac{\delta \mathcal{H}_1}{\delta u},\dfrac{\delta \mathcal{H}_2}{\delta u},\cdots, \dfrac{\delta \mathcal{H}_n}{\delta u}\right\}\subset L^2(\Omega).$$
where the orthogonality is in the sense of the inner product of $L^2(\Omega)$.

\begin{mydef}
	Let $\frac{\delta \mathcal{H}}{\delta(u, v)}$ be a fixed  discrete variational derivative of $\mathcal{H}[u]$. The discrete tangent space at $(v,w)\in\mathcal{F}(\Omega)\times\mathcal{F}(\Omega)$ is
	\begin{equation*}
	T_{(v,w)}M=\left\{\eta\in\mathcal{F}(\Omega):\left<\frac{\delta \mathcal{H}_1}{\delta(v, w)}, \eta\right>=\left<\frac{\delta \mathcal{H}_2}{\delta(v, w)},\eta\right>=\cdots=\left<\frac{\delta \mathcal{H}_n}{\delta(v, w)},\eta\right>=0\right\}.
	\end{equation*}
	A vector $\eta=\eta_{(v,w)}\in T_{(v,w)}M$ is called a discrete tangent vector.
\end{mydef}

The following lemma plays an important role in deriving the multiple invariants-preserving integrators. The statement and the proof are similar to that of Lemma 2.2 in \cite{dahlby2011}.
\begin{mylem}\label{lem:cons}
	Let $u^{k+1}(x)=\varphi_h(u^k(x))$ be a semi-analytical time-stepping integrator for the equation \eqref{waveq}. It preserves the $n$ invariants \eqref{ninvariants} simultaneously in the sense that
\begin{equation*}
\begin{aligned}&\mathcal{H}_1(u^{k+1}(x)) = \mathcal{H}_1(u^k(x)),\quad \mathcal{H}_2(u^{k+1}(x)) = \mathcal{H}_2(u^k(x)),\\
 &\quad\cdots, \mathcal{H}_n(u^{k+1}(x)) = \mathcal{H}_n(u^k(x))\quad k=0, 1, 2, \ldots,\end{aligned}\end{equation*}
providing that
	\begin{equation*}
	\eta_{(u^{k+1}(x),u^k(x))}:=\dfrac{u^{k+1}(x)-u^k(x)}{\Delta t}\in T_{(u^{k+1}(x),u^k(x))}M.
	\end{equation*}
\end{mylem}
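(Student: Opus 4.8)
The plan is to reduce the simultaneous conservation of all $n$ invariants to a single, purely algebraic identity, proved separately for each index since the argument is identical. First I would fix $i\in\{1,\ldots,n\}$ and a step $k$, and apply the defining relation \eqref{dvd} of the discrete variational derivative to the pair $(u^{k+1}(x),u^k(x))$. This rewrites the one-step increment of the invariant as an $L^2$ inner product,
$$\mathcal{H}_i[u^{k+1}(x)]-\mathcal{H}_i[u^k(x)]=\left<\frac{\delta \mathcal{H}_i}{\delta(u^{k+1}(x),u^k(x))},u^{k+1}(x)-u^k(x)\right>,$$
which is exactly the form that the discrete tangent space is built to annihilate.

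Next I would use the definition $\eta_{(u^{k+1}(x),u^k(x))}=(u^{k+1}(x)-u^k(x))/\Delta t$ given in the statement to substitute $u^{k+1}(x)-u^k(x)=\Delta t\,\eta_{(u^{k+1}(x),u^k(x))}$, and pull the scalar $\Delta t$ out by bilinearity of $\left<\cdot,\cdot\right>$. This leaves
$$\mathcal{H}_i[u^{k+1}(x)]-\mathcal{H}_i[u^k(x)]=\Delta t\left<\frac{\delta \mathcal{H}_i}{\delta(u^{k+1}(x),u^k(x))},\eta_{(u^{k+1}(x),u^k(x))}\right>.$$
The hypothesis $\eta_{(u^{k+1}(x),u^k(x))}\in T_{(u^{k+1}(x),u^k(x))}M$ says precisely that this inner product vanishes for every $i$, by the definition of the discrete tangent space. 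Hence the right-hand side is zero, so $\mathcal{H}_i[u^{k+1}(x)]=\mathcal{H}_i[u^k(x)]$; as $i$ was arbitrary this establishes conservation of all $n$ invariants over one step, and iterating over $k=0,1,2,\ldots$ yields the full claim.

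I do not expect any genuine obstacle here: the statement is an immediate consequence of splicing together the two definitions, and the whole content is the linear-algebraic observation that membership in $T_{(v,w)}M$ orthogonalizes the increment against each $\frac{\delta \mathcal{H}_i}{\delta(v,w)}$. The only point needing a word of care is that \eqref{dvd} presupposes $u^{k+1}(x)\neq u^k(x)$; should equality occur the invariants are trivially unchanged, so the conclusion holds in that degenerate case as well. It is also worth noting that the proof uses only the first relation in \eqref{dvd} and not the consistency condition $\frac{\delta \mathcal{H}}{\delta(u,u)}=\frac{\delta \mathcal{H}}{\delta u}$, which is instead needed when analysing the order of accuracy rather than exact conservation.
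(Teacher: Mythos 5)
Your proof is correct and takes essentially the same route as the paper's: both rewrite the one-step increment $\mathcal{H}_i[u^{k+1}(x)]-\mathcal{H}_i[u^k(x)]$ as an $L^2$ inner product via the defining relation \eqref{dvd}, factor out $\Delta t$ to expose $\eta_{(u^{k+1}(x),u^k(x))}$, and conclude from membership in the discrete tangent space that this inner product vanishes for each $i$. Your added remarks (the degenerate case $u^{k+1}(x)=u^k(x)$ and the observation that only the first condition in \eqref{dvd} is used) are small refinements that the paper's proof leaves implicit.
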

\begin{proof}
	Since $\eta_{(u^{k+1},u^k)}\in T_{(u^{k+1},u^k)}M$,  it can be verified that
	\begin{equation*}
\begin{aligned}&\mathcal{H}_i(u^{k+1}(x))-\mathcal{H}_i(u^k(x))\\
&=\int_{\Omega}\frac{\delta \mathcal{H}_i}{\delta(u^{k+1}(x), u^{k}(x))}\cdot(u^{k+1}(x)-u^k(x))dx\\
&=\Delta t\left<\frac{\delta \mathcal{H}_{i}}{\delta(u^{k+1}(x), u^{k}(x))},\eta_{(u^{k+1}(x),u^k(x))}\right>=0, \\
 &k=0, 1, 2, \cdots, i=1, 2, \cdots, n.
\end{aligned}
	\end{equation*}
\end{proof}
In what follows, we give a general framework to construct the multiple invariants-preserving integrator by using the projection technique.
Let
{\small
$$Y\left(u^{k+1}(x), u^{k}(x)\right)=span\left\{\dfrac{\delta \mathcal{H}_1}{\delta(u^{k+1}(x), u^{k}(x))}, \dfrac{\delta \mathcal{H}_2}{\delta(u^{k+1}(x), u^{k}(x))},\cdots, \dfrac{\delta \mathcal{H}_n}{\delta(u^{k+1}(x), u^{k}(x))}\right\}$$}
be the subspace spanned by the discrete variational derivatives of $\mathcal{H}_{i}[u], i=1, 2, \cdots, n$ at $\left(u^{k+1}(x), u^{k}(x)\right)$. Assume that $\{w^{1}(x), w^2(x),\cdots, w^{n}(x)\}$ is an orthogonal basis of  $Y\left(u^{k+1}(x), u^{k}(x)\right)$ which can be obtained by  the classical Gram-Schmidt procedure.
 Then the projection operator
 $$\mathcal{P}\left(u^{k+1}(x), u^{k}(x)\right)v(x)=v(x)-\sum\limits_{i=1}^n\left<v(x),w^{i}(x)\right>w^{i}(x)$$
 would be a smooth orthogonal projection operator onto the discrete tangent space $T_{\left(u^{k+1}(x), u^{k}(x)\right)} M$. We propose the projection integrator

\begin{equation}\label{projmethod}
y^{k+1}(x)=\psi_h\left(u^k(x)\right), \quad u^{k+1}(x)=u^k(x)+\mathcal{P}\left(u^{k+1}(x), u^{k}(x)\right)\left(y^{k+1}(x)-u^k(x)\right),
\end{equation}
or equivalently
\begin{equation}\label{projmethod2}
  u^{k+1}(x)=u^k(x)+\mathcal{P}\left(u^{k+1}(x), u^{k}(x)\right)\left(\psi_h\left(u^k(x)\right)-u^k(x)\right),
\end{equation}
where $\psi_h$ is the  flow that defines an arbitrary integrator of order $p$. It is easy to see that the projection integrator \eqref{projmethod} satisfies the condition in Lemma 3.1. Hence it  preserves the $n$ invariants \eqref{ninvariants}.

\begin{myrem}
The operator $\mathcal{I}-\mathcal{P}\left(u^{k+1}(x), u^{k}(x)\right)$  is nothing but the orthogonal projection operator on the space $Y\left(u^{k+1}(x), u^{k}(x)\right)$.
\end{myrem}

\begin{myrem}  If both the  discrete variational derivatives $\dfrac{\delta \mathcal{H}_i}{\delta(u^{k+1}, u^{k})}, i=1, 2, \cdots, n$ and the underlying integrator $\psi_h$ are symmetric, then the integrator \eqref{projmethod} is symmetric as well.
\end{myrem}
Using Runge-Kutta (RK) integrator as the underlying integrator $\psi_h$, we can construct concrete multiple invariants-preserving integrators.
\begin{mydef}An s-stage RK integrator for the equation \eqref{waveq} reads

\begin{equation}\label{RKmetheod}
\left\{\begin{array}{l}
U^{k,i}(x)=u^k(x)+\Delta t \sum\limits_{j=1}^s a_{i j} f\left( U^{k,j}(x)\right), \quad i=1, \ldots, s, \\
u^{k+1}(x)=u^k(x)+\Delta t \sum\limits_{i=1}^s b_i f\left( U^{k,i}(x)\right), \quad k=0, 1, \ldots,
\end{array}\right.
\end{equation}
where $a_{i j}, b_i, c_i, i, j=1, \ldots, s$ are real constants, $f(u)=\mathcal{J}\dfrac{\delta \mathcal
{G}}{\delta u}$, $u^k(x)$ denotes the numerical solution after $k$ steps of the integrator and is an approximation to the exact solution $u(x,t_k)$, while the internal stage value $U^{k,i}(x)$ is an approximation to $u\left(x,t_k+c_i \Delta t\right)$.
\end{mydef}
The RK integrator \eqref{RKmetheod} can be briefly expressed by the following Butcher tableau\\
\begin{center}\begin{tabular}{c|l}
$c$ & $A$ \\
\hline & $b^{\top}$
\end{tabular}$=$\begin{tabular}{c|ccc}
$c_1$ & $a_{11}$ & $\ldots$ & $a_{1 s}$ \\
$\vdots$ & $\vdots$ & $\ddots$ & $\vdots$ \\
$c_s$ & $a_{s 1}$ & $\ldots$ & $a_{s s}$ \\
\hline & $b_1$ & $\ldots$ & $b_s$
\end{tabular}
\end{center}
where $b=\left(b_1, \ldots, b_s\right)^{\top}$ and $c=\left(c_1, \ldots, c_s\right)^{\top}$ are $s$-dimensional vectors, and $A=\left(a_{i j}\right)$ is an $s \times s$ matrix.  If $a_{i j}=0$ for all $1 \leq i \leq j \leq s$, the integrator \eqref{RKmetheod} is explicit, otherwise it is implicit.
\begin{mydef}
The projection RK integrator for the equation \eqref{waveq} reads
\begin{equation}\label{ProjectionRKmetheod}
\left\{\begin{array}{l}
U^{k,i}(x)=u^k(x)+h \sum\limits_{j=1}^s a_{i j} f\left( U^{k,j}(x)\right), \quad i=1, \ldots, s, \\
u^{k+1}(x)=u^k(x)+h \mathcal{P}\left(u^{k+1}(x), u^{k}(x)\right)\sum\limits_{i=1}^s b_i f\left( U^{k,i}(x)\right), \quad k=0, 1, \ldots.
\end{array}\right.
\end{equation}
\end{mydef}
It should be noted that no matter the underlying RK integrator is explicit or implicit, the corresponding projection RK integrator will be implicit since $u^{k+1}(x)$ appears in the projection operator.
\begin{mytheo}
If the underlying integrator is of order $p$, then the projection integrator \eqref{projmethod} is of order $p$ as well, i.e.,
$$
\begin{aligned}
& \left\|u(x,t+h)-u(x,t)-\mathcal{P}(u(x,t+h), u(x,t))\left(\psi_h(u(x,t))-u(x,t)\right)\right\|=\mathcal{O}\left(h^{p+1}\right).
\end{aligned}
$$
\end{mytheo}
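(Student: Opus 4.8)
The plan is to bound the local error by splitting it into a \emph{consistency} part governed by the underlying integrator and a \emph{projection} part governed by the invariants. Write $u := u(x,t)$ and $\tilde u := u(x,t+h)$ for the exact solution, and abbreviate $\mathcal{P} := \mathcal{P}(\tilde u, u)$. Adding and subtracting $\psi_h(u)-u$ inside the expression gives the identity
\[
\tilde u - u - \mathcal{P}\big(\psi_h(u)-u\big) = \big(\tilde u - \psi_h(u)\big) + \big(\mathcal{I}-\mathcal{P}\big)\big(\psi_h(u)-u\big).
\]
The first summand is exactly the local error of the underlying integrator, so the order-$p$ hypothesis gives $\|\tilde u - \psi_h(u)\| = \mathcal{O}(h^{p+1})$ at once. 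Everything therefore reduces to showing that the projected residual $(\mathcal{I}-\mathcal{P})(\psi_h(u)-u)$ is also $\mathcal{O}(h^{p+1})$.

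To do this I would use that $\mathcal{I}-\mathcal{P}$ is the orthogonal projection onto $Y(\tilde u, u)$ (as noted in the remark above), so in terms of the orthonormal basis $\{w^1,\dots,w^n\}$ produced by Gram--Schmidt,
\[
\big(\mathcal{I}-\mathcal{P}\big)\big(\psi_h(u)-u\big) = \sum_{i=1}^{n}\big\langle \psi_h(u)-u,\, w^i\big\rangle\, w^i .
\]
By orthonormality it then suffices to show that each coefficient $\langle \psi_h(u)-u, w^i\rangle$ is $\mathcal{O}(h^{p+1})$, since $\|(\mathcal{I}-\mathcal{P})(\psi_h(u)-u)\|^2 = \sum_{i=1}^n |\langle \psi_h(u)-u, w^i\rangle|^2$.

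The crucial step, and the only place where the structure of the problem enters, is to estimate these coefficients using the invariance of the functionals. Since each $\mathcal{H}_i$ is conserved along the exact flow, $\mathcal{H}_i(\tilde u)=\mathcal{H}_i(u)$, and the defining identity \eqref{dvd} of the discrete variational derivative yields
\[
0 = \mathcal{H}_i(\tilde u)-\mathcal{H}_i(u) = \Big\langle \frac{\delta \mathcal{H}_i}{\delta(\tilde u, u)},\, \tilde u - u\Big\rangle , \qquad i=1,\dots,n.
\]
Because every $w^i$ lies in $Y(\tilde u, u)=\mathrm{span}\{\delta \mathcal{H}_j/\delta(\tilde u, u)\}$, this forces $\langle w^i, \tilde u - u\rangle = 0$ for all $i$, i.e. $\tilde u - u \perp Y(\tilde u, u)$. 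Splitting $\psi_h(u)-u = (\psi_h(u)-\tilde u)+(\tilde u - u)$ and using this orthogonality annihilates the second term, so that $\langle \psi_h(u)-u, w^i\rangle = \langle \psi_h(u)-\tilde u, w^i\rangle$, which is bounded by $\|\psi_h(u)-\tilde u\|\,\|w^i\| = \mathcal{O}(h^{p+1})$ because $\|w^i\|=1$. Summing the squared coefficients and combining with the consistency estimate through the triangle inequality completes the bound.

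I expect the main conceptual obstacle to be the recognition that the invariance $\mathcal{H}_i(\tilde u)=\mathcal{H}_i(u)$, fed through the bracket identity \eqref{dvd}, is precisely what makes the exact increment $\tilde u - u$ orthogonal to the span of the discrete variational derivatives; once that orthogonality is available the whole estimate is routine. A secondary technical point to check is that the basis $\{w^i\}$ is well defined for small $h$: this needs the discrete variational derivatives $\delta \mathcal{H}_i/\delta(\tilde u, u)$ to stay linearly independent, which holds because they tend to the independent derivatives $\delta \mathcal{H}_i/\delta u$ as $h\to 0$.
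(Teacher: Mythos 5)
Your proof is correct and takes essentially the same route as the paper's: split off the local error of the underlying integrator $\psi_h$, then use conservation of the invariants along the exact flow together with the defining identity \eqref{dvd} of the discrete variational derivative to show that each coefficient $\langle \psi_h(u)-u,\, w^i\rangle$ is $\mathcal{O}(h^{p+1})$. The only cosmetic differences are that you state the orthogonality $u(x,t+h)-u(x,t)\perp Y$ up front instead of expanding $w^i=\sum_j d_{ij}\,\delta\mathcal{H}_j/\delta(u(x,t+h),u(x,t))$ inside the estimate as the paper does, and you additionally record the (paper-omitted) check that the basis $\{w^i\}$ remains well defined for small $h$.
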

\begin{proof}
Let $\{w^{1}(x), w^2(x),\cdots, w^{n}(x)\}$ be an orthogonal basis of
\begin{equation*}
\begin{aligned}
&Y\left(u(x,t+h), u(x,t)\right)\\
&=span\left\{\dfrac{\delta \mathcal{H}_1}{\delta(u(x,t+h), u(x,t))}, \dfrac{\delta \mathcal{H}_2}{\delta(u(x,t+h), u(x,t))},\cdots, \dfrac{\delta \mathcal{H}_n}{\delta(u(x,t+h), u(x,t))}\right\},
\end{aligned}
\end{equation*}
then
$$\mathcal{P}\left(u(x,t+h), u(x,t)\right)v(x)=v(x)-\sum\limits_{i=1}^n\left<v(x),w^{i}(x)\right>w^{i}(x).$$

We compute
\begin{equation}\label{proofeq1}
\begin{aligned}
&\left\|u(x,t+h)-u(x,t)-\mathcal{P}(u(x,t+h), u(x,t))\left(\psi_h(u(x,t))-u(x,t)\right)\right\|\\
&=\left\|u(x,t+h)-u(x,t)-\left(\psi_h(u(x,t))-u(x,t)\right)-\sum\limits_{i=1}^n\left<\psi_h(u(x,t))-u(x,t),w^{i}\right>w^{i}\right\|\\
&\leqslant\left\|u(x,t+h)-u(x,t)-\left(\psi_h(u(x,t))-u(x,t)\right)\right\|+\left\|\sum\limits_{i=1}^n\left<\psi_h(u(x,t))-u(x,t),w^{i}\right>w^{i}\right\|\\
&=\left\|u(x,t+h)-u(x,t)-\left(\psi_h(u(x,t))-u(x,t)\right)\right\|+\sum\limits_{i=1}^n\lvert\left<\psi_h(u(x,t))-u(x,t),w^{i}\right>\rvert\\
\end{aligned}
\end{equation}

Since $\psi_h$ is of order $p$, we have

\begin{equation}\label{proofeq2}
\left\|u(x,t+h)-u(x,t)-\left(\psi_h(u(x,t))-u(x,t)\right)\right\|=\left\|u(x,t+h)-\psi_h(u(x,t))\right\|=\mathcal{O}\left(h^{p+1}\right).
\end{equation}

In the following, we give the estimate of

$$
\left<\psi_h(u(x,t))-u(x,t),w^{i}\right>, i=1, 2, \cdots, n.
$$
 Bearing in mind that both
 $$\{w^{1}(x), w^2(x),\cdots, w^{n}(x)\}$$
 and
 $$\left\{\dfrac{\delta \mathcal{H}_1}{\delta(u(x,t+h), u(x,t))}, \dfrac{\delta \mathcal{H}_2}{\delta(u(x,t+h), u(x,t))},\cdots, \dfrac{\delta \mathcal{H}_n}{\delta(u(x,t+h), u(x,t))}\right\}$$
 are the  basses of
$Y\left(u(x,t+h), u(x,t)\right)$, there exist some real constants $d_{ij}$ such that
$$w^{i}(x)=\sum\limits_{j=1}^{n}d_{ij}\dfrac{\delta \mathcal{H}_j}{\delta(u(x,t+h), u(x,t))}, \quad i=1, 2, \ldots, n.$$
Hence
\begin{equation*}\label{proofeq33}
\begin{aligned}
&\left<\psi_h(u(x,t))-u(x,t),w^{i}(x)\right>\\
&=\left<\psi_h(u(x,t))-u(x,t+h)+u(x,t+h)-u(x,t),\sum\limits_{j=1}^{n}d_{ij}\dfrac{\delta \mathcal{H}_j}{\delta(u(x,t+h), u(x,t))}\right>\\
&=\sum\limits_{j=1}^{n}d_{ij}\left<\psi_h(u(x,t))-u(x,t+h),\dfrac{\delta \mathcal{H}_j}{\delta(u(x,t+h), u(x,t))}\right>\\
&\qquad\qquad+\sum\limits_{j=1}^{n}d_{ij}\left<u(x,t+h)-u(x,t),\dfrac{\delta \mathcal{H}_j}{\delta(u(x,t+h), u(x,t))}\right>\\
\end{aligned}
\end{equation*}
According to the definition of discrete variational derivative and the order of $\psi_h$, we have
\begin{equation*}
\begin{aligned}&\left<u(x,t+h)-u(x,t), \dfrac{\delta \mathcal{H}_j}{\delta(u(x,t+h), u(x,t))}\right>\\
&=\mathcal{H}_j[u(x,t+h)]-\mathcal{H}_j[u(x,t)]=0, j=1, 2, \cdots, n\end{aligned}
\end{equation*}
and
$$\|\psi_h(u(x,t))-u(x,t+h)\|=\mathcal{O}(h^{p+1}).$$
Therefore,
\begin{equation}\label{proofeq3}
\left<\psi_h(u(x,t))-u(x,t),w^{i}(x)\right>=\mathcal{O}(h^{p+1}), i=1, 2, \cdots, n.
\end{equation}
The proof is completed by combining the results of \eqref{proofeq1}, \eqref{proofeq2} and \eqref{proofeq3}.
\end{proof}

\section{Application to the KdV equation}\label{sec:KdV}
Various aspects of the  Korteweg-de Vries (KdV) equation have been studied extensively in the literature(\cite{yanjl2019,korteweg1895,thierry2023,gong2021}).  Here, as an example of application to conservative partial differential equations, we consider the KdV equation of the classical form \cite{ascher2004}
\begin{equation}\label{kdv}
\begin{aligned}
	&u_t(x,t)=\alpha u(x,t)u_x(x,t)+\nu u_{xxx}(x,t),\quad (x,t)\in [-l,l]\times[0,T],
\end{aligned}
\end{equation}
with periodic boundary condition
\begin{equation}
u(-l,t)=u(l,t),\quad t\in [0,T]
\end{equation}
and initial condition
\begin{equation}
u(0,x)=u_0(x),\quad x\in[-l,l],
\end{equation}
where $\alpha, \nu$ are real constants.  The KdV equation has a great number of applications in various branches of physical science such as fluid dynamics, aerodynamics, and continuum mechanics \cite{zabusky1967,drazin1989,crighton1995,debnath1998}.
\subsection{Temporal semi-analytical discretzization }
The KdV equation  \eqref{kdv} can be presented in the Hamiltonian form
\begin{equation}\label{kdvHam}\left\{\begin{aligned}
&u_t=\mathcal{J}\frac{\delta \mathcal{G}}{\delta u},\qquad (x,t)\in[-l,l]\times [0,T]\\
&u(-l,t)=u(l,t),\quad t\in [0,T]\\
&u(x,0)=\psi(x),\ \quad x\in[-l,l],
\end{aligned}\right.
\end{equation}
where the skew adjoint operator $\mathcal{J}=\partial_x$  and the Hamiltonian
 \begin{equation}\label{hamiltonianpdf}
 	\begin{aligned}
 		 \qquad \mathcal{G}[u]=\int_{-l}^{l}(\frac{\alpha}{6}u^3-\frac{\nu}{2}u_x^2)dx\equiv:\int_{-l}^{l} G(u,u_x)dx.\\
 	\end{aligned} 	
 \end{equation}
The  the variational derivative of  $\mathcal{G}[u]$ can be computed as
$$\frac{\delta \mathcal{G}}{\delta u}=\frac{\partial G}{\partial u}-\partial_x\left(\frac{\partial G}{\partial u_x}\right)=\frac{\alpha}{2}u^2+\nu u_{xx}.$$

The two semi-analytical DVD integrators proposed in the paper for the KdV equation \eqref{hamiltonianpdf} are

\begin{equation}\label{DG11}
\begin{aligned}
&\dfrac{u^{k+1}(x)-u^{k}(x)}{h}=\partial_x\left(\frac{\alpha}{6}((u^{k+1}(x))^2+u^{k+1}(x)u^{k}(x)+(u^{k}(x))^2)\right.\\
&\left.\qquad\qquad\qquad\qquad\qquad\qquad-\frac{\nu}{2}\frac{(u^{k+1}_x(x))^2-(u^{k}_x(x))^2}{u^{k+1}(x)-u^{k}(x)}\right),
k=0, 1,\ldots
\end{aligned}
\end{equation}
%
%\begin{equation}\label{DG11}
%\begin{aligned}
%&\dfrac{u^{k+1}(x)-u^{k}(x)}{\Delta t}\\
%&=\partial_x\dfrac{\frac{\alpha}{6}(u^{k+1}(x))^3-\frac{\nu}{2}(u^{k+1}_x(x))^2-\frac{\alpha}{6}(u^{k}(x))^3+\frac{\nu}{2}(u^{k}_x(x))^2}{u^{k+1}(x)-u^{k}(x)}\\ &=\partial_x\left(\frac{\alpha}{6}((u^{k+1}(x))^2+u^{k+1}(x)u^{k}(x)+(u^{k}(x))^2)-\frac{\nu}{2}\frac{(u^{k+1}_x(x))^2-(u^{k}_x(x))^2}{u^{k+1}(x)-u^{k}(x)}\right)\\
%k=0, 1,\ldots,
%\end{aligned}
%\end{equation}
and

\begin{equation}\label{DG21}
\begin{aligned}
&\dfrac{u^{k+1}(x)-u^{k}(x)}{h}= \partial_x\left(\frac{\alpha}{6}((u^{k+1}(x))^2+u^{k+1}(x)u^{k}(x)+(u^{k}(x))^2)\right)\\
&\qquad\qquad\qquad\qquad\qquad\qquad\qquad\qquad+\frac{\nu}{2}(u^{k+1}_{xxx}(x)+u^{k}_{xxx}(x)),
k=0, 1,\ldots.
\end{aligned}
\end{equation}
%\begin{equation}\label{DG21}
%\begin{aligned}
%&\dfrac{u^{k+1}(x)-u^{k}(x)}{\Delta t}\\
%&=\partial_x\int_0^1 [\frac{\alpha}{2}(\xi u^{k+1}(x)+(1-\xi) u^{k}(x))^2+\xi \nu u^{k+1}_{xx}(x)+\nu(1-\xi) u^{k}_{xx}(x)] \mathrm{d} \xi\\
%&= \partial_x\left(\frac{\alpha}{6}((u^{k+1}(x))^2+u^{k+1}(x)u^{k}(x)+(u^{k}(x))^2)\right)+\frac{\nu}{2}(u^{k+1}(x)+u^{k}(x)),
%k=0, 1,\ldots,
%\end{aligned}
%\end{equation}

The KdV equation \eqref{kdv}, as a completely integrable system,  actually has  infinite number of invariants. Here, we are concerned with the following three invariant functionals:
 \begin{itemize}
    \item Mass:
   \begin{equation}\label{masspdf}
\mathcal{H}_1[u]  =\int_{-l}^l u d x.
\end{equation}

   \item Momentum:
   \begin{equation}\label{momentumpdf}
\mathcal{H}_2[u]  =\frac{1}{2} \int_{-l}^l u^2 d x.
\end{equation}
   \item Energy:
    \begin{equation}\label{hamiltonianpdf}
 	\begin{aligned}
 		 \mathcal{H}_3[u]=\int_{-l}^l(\frac{\alpha}{6}u^3-\frac{\nu}{2}u_x^2)dx.\\
 	\end{aligned} 	
 \end{equation}
 \end{itemize}
The variational derivatives of $\mathcal{H}_{i}[u], i=1, 2, 3$ can be easily derived as
$$\frac{\delta \mathcal{H}_1}{\delta u}=1, \frac{\delta \mathcal{H}_2}{\delta u}=u, \frac{\delta \mathcal{H}_3}{\delta u}=\frac{\alpha}{2}u^2+\nu u_{xx}.$$
Now, we present a concrete projection RK integrator for the KdV equation \eqref{hamiltonianpdf}. Here, the  AVF-type discrete variational derivative \eqref{avfdvd} is chosen as  the  discrete variational derivative. Then the  projection RK integrator reads
\begin{equation}\label{projmethod1}
 u^{k+1}(x)=u^k(x)+h\mathcal{P}\left(u^{k+1}(x), u^{k}(x)\right)\left(\psi_h(u^k(x))-u^k(x)\right),
\end{equation}
where
$$\mathcal{P}\left(u^{k+1}(x), u^{k}(x)\right)=\mathcal{I}-\mathcal{P}_{Y},$$
with $\mathcal{P}_{Y}$ the orthogonal projection operator on the space
\begin{equation*}
\begin{aligned}&Y\left(u^{k+1}(x), u^{k}(x)\right)=span\left\{1, \frac{u^{k+1}(x)+u^{k}(x)}{2},\right.\\
&\left.\frac{\alpha}{6}((u^{k+1}(x))^2+u^{k+1}(x)u^{k}(x)+(u^{k}(x))^2)+\frac{\nu}{2}(u^{k+1}_{xx}(x)+u^{k}_{xx}(x))\right\}.
\end{aligned}
\end{equation*}

\subsection{Full discretization}\label{Sec:spectralexpansion}
To obtain   full-discretization  schemes for the KdV equation  \eqref{kdv},   a finite-dimension approximation in spatial direction is required. In this section, we will demonstrate how the full-discretization  scheme arises from the  projection RK integrator  \eqref{projmethod1}.
 In what follows,  we adopt the spectral framework from \cite{Brugnano2109KdV,Barletti2024,BrugL2015} to carry out the discretization in spatial direction.

First of all, we expand the solution $u(x,t)$ in space $L^2[-l,l]$ as
\begin{equation}\label{spectralexpansion1}
u(x, t)=\sum_{j \geq 0} u_j(t) \omega_j(x),
\end{equation}
where
$$
\begin{aligned}
\omega_{2 j}(x) & =\sqrt{\frac{2-\delta_{j 0}}{2l}} \cos \left(j \frac{x+l}{l}  \pi\right), \\
\omega_{2 j+1}(x) & =\sqrt{\frac{1}{l}} \sin \left((j+1) \frac{x+l}{l}  \pi\right), \quad j=0,1,2, \ldots
\end{aligned}
$$
is the orthonormal basis \cite{Barletti2024} satisfying
$$
\int_{-l}^l \omega_i(x) \omega_j(x)\mathrm{d} x=\delta_{i j}, \quad i, j=0,1,2, \ldots,
$$
with $\delta_{i j}$  the Kronecker delta. Let
$$
\boldsymbol{\omega}(x)=\left(\begin{array}{c}
\omega_0(x) \\
\omega_1(x) \\
\omega_2(x) \\
\vdots
\end{array}\right), \quad \boldsymbol{u}(t)=\left(\begin{array}{c}
q_0(t) \\
q_1(t) \\
q_2(t) \\
\vdots
\end{array}\right),
$$
the expansion \eqref{spectralexpansion1} can be written compactly as
\begin{equation}\label{expansion}
u(x, t)= \boldsymbol{\omega}(x)^{\top}  \boldsymbol{u}(t).
\end{equation}
Correspondingly, let  the expansions of  $u^{k}(x)$ and  $u^{k+1}(x)$ in  \eqref{projmethod1} be
 \begin{equation}
 u^{k}(x)=\boldsymbol{\omega}(x)^{\top}  \boldsymbol{u}^k,\quad   u^{k+1}(x)=\boldsymbol{\omega}(x)^{\top}  \boldsymbol{u}^{k+1}.
 \end{equation}
It can be verified that
$$
\boldsymbol{\omega}^{(k)}(x)=D^k \boldsymbol{\omega}(x),
$$
where
$$
\begin{aligned}
& D=\frac{ \pi}{l}\left(\begin{array}{cccc}
0 & & & \\
& 1 \cdot J & & \\
& & 2 \cdot J & \\
& & & \ddots
\end{array}\right).
\end{aligned}
$$
Here the skew-symmetric matrix $J$ is defined as
$J=\left(
      \begin{array}{cc}
        0 & 1 \\
        -1 & 0 \\
      \end{array}
    \right)=-J^{\top}.
$
Therefore, the  projection RK integrator  \eqref{projmethod1} now reads
\begin{equation}\label{projmethod11}
\boldsymbol{\omega}(x)^{\top}  \boldsymbol{u}^{k+1}=\boldsymbol{\omega}(x)^{\top}  \boldsymbol{u}^{k}+h\mathcal{P}\left(u^{k+1}(x), u^{k}(x)\right)\left(\psi_h(\boldsymbol{\omega}(x)^{\top}  \boldsymbol{u}^{k})-\boldsymbol{\omega}(x)^{\top}  \boldsymbol{u}^{k}\right),
\end{equation}
where
$$\mathcal{P}\left(u^{k+1}(x), u^{k}(x)\right)=\mathcal{I}-\mathcal{P}_{Y},$$
with $\mathcal{P}_{Y}$ the orthogonal projection operator on the space
\begin{equation*}
\begin{aligned}
&Y\left(u^{k+1}(x), u^{k}(x)\right)\\
%&=\span\left\{1,\frac{u^{k+1}(x)+u^{k}(x)}{2},\frac{\alpha}{6}((u^{k+1}(x))^2+u^{k+1}(x)u^{k}(x)+(u^{k}(x))^2)+\frac{\nu}{2}(u^{k+1}_{xx}(x)+u^{k}_{xx}(x))\right\}\\
&=span\left\{1, \boldsymbol{\omega}(x)^{\top} \frac{\boldsymbol{u}^{k+1}+\boldsymbol{u}^{k}}{2},
\frac{\alpha}{6}((\boldsymbol{\omega}(x)^{\top}  \boldsymbol{u}^{k+1})^2+\boldsymbol{\omega}(x)^{\top}  \boldsymbol{u}^{k+1}\boldsymbol{\omega}(x)^{\top}  \boldsymbol{u}^{k}+(\boldsymbol{\omega}(x)^{\top}  \boldsymbol{u}^{k})^2)+\frac{\nu}{2}\boldsymbol{\omega}(x)^{\top} (D^2)^{\top} (\boldsymbol{u}^{k+1}+\boldsymbol{u}^{k})\right\}\\
&=\boldsymbol{\omega}(x)^{\top}span\left\{\boldsymbol{e},  \frac{\boldsymbol{u}^{k+1}+\boldsymbol{u}^{k}}{2},
\frac{\alpha}{6}\int_{-l}^{l}\boldsymbol{\omega}(x)((\boldsymbol{\omega}(x)^{\top}  \boldsymbol{u}^{k+1})^2+\boldsymbol{\omega}(x)^{\top}  \boldsymbol{u}^{k+1}\boldsymbol{\omega}(x)^{\top}  \boldsymbol{u}^{k}+(\boldsymbol{\omega}(x)^{\top}  \boldsymbol{u}^{k})^2)dx+\frac{\nu}{2} (D^2)^{\top} (\boldsymbol{u}^{k+1}+\boldsymbol{u}^{k})\right\}
\end{aligned}
\end{equation*}
with $\boldsymbol{e}=(1,0,\ldots,0,\ldots)^{\top} $. Here, we  have transferred the projection process in the space $L^2[-l,l]$ to the the infinite dimensional vector space $\mathbb{R}^{\infty}$.

Bearing in mind the orthonormality of the basis functions $\omega_{j}(x), j=0,1,\ldots$, it follows from \eqref{projmethod11} that the  projection RK integrator can be expressed coordinate-wisely as
\begin{equation}\label{projmethod12}
 \boldsymbol{u}^{k+1}= \boldsymbol{u}^{k}+h(I-P)\left(\alpha \int_{-l}^{l}\boldsymbol{\omega}(x)\psi_h(\boldsymbol{\omega}(x)^{\top}  \boldsymbol{u}^{k})dx-  \boldsymbol{u}^{k}\right),
\end{equation}
where $I$ is the infinite dimensional identity matrix and $P$ is the projection matrix on the subspace
\begin{equation}\label{subspace1}span\left\{\boldsymbol{e}_1,  \frac{\boldsymbol{u}^{k+1}+\boldsymbol{u}^{k}}{2},
\frac{\alpha}{6}\int_{-l}^{l}\boldsymbol{\omega}(x)((\boldsymbol{\omega}(x)^{\top}  \boldsymbol{u}^{k+1})^2+\boldsymbol{\omega}(x)^{\top}  \boldsymbol{u}^{k+1}\boldsymbol{\omega}(x)^{\top}  \boldsymbol{u}^{k}+(\boldsymbol{\omega}(x)^{\top}  \boldsymbol{u}^{k})^2)dx+\frac{\nu}{2} (D^2)^{\top} (\boldsymbol{u}^{k+1}+\boldsymbol{u}^{k})\right\},
\end{equation}
which can be calculated as
\begin{equation}
P=G(G^{\top}G)^{-1}G^T,
\end{equation}
where $G$ be a ${\infty\times3}$ matrix  whose columns  consist of the bases of the subspace \eqref{subspace1}.

Then, suitable truncation leads to the concrete practical full-discretization projection RK schemes. To be more precisely, we restrict ourselves to  the  $(2N+1)$-dimension subspace $\mathcal{V}_N=\operatorname{span}\left\{\omega_{j}(x), j=0,1,\ldots,2N.\right\}$ and look for the approximation
\begin{equation}\label{truncatedexpansion}
u(x, t)\thickapprox u_{2N+1}(x,t)=\boldsymbol{\omega}^{\top}_{2N+1}(x)  \boldsymbol{u}_{2N+1}(t)
\end{equation}
with
$$
\boldsymbol{\omega}_{2N+1}(x)=\left(\begin{array}{c}
\omega_0(x) \\
\omega_1(x) \\
\vdots \\
\omega_{2 N}(x)
\end{array}\right) \in\mathbb{R}^{2N+1}, \quad \boldsymbol{u}_{2N+1}(t)=\left(\begin{array}{c}
u_0(t) \\
u_1(t) \\
\vdots \\
u_{2 N}(t)
\end{array}\right) \in \mathbb{R}^{2N+1}.
$$

 The truncated projection RK integrator in space $\mathcal{V}_N$ can be expressed coordinate-wisely as
\begin{equation}\label{truncatedprojmethod12}
 \boldsymbol{u}^{k+1}_{2N+1}= \boldsymbol{u}^{k}_{2N+1}+\Delta t(I_{2N+1}-P_{2N+1})\left(\alpha \int_{-l}^{l}\boldsymbol{\omega}_{2N+1}(x)\psi_h(\boldsymbol{\omega}_{2N+1}(x)^{\top}  \boldsymbol{u}_{2N+1}^{k})dx-  \boldsymbol{u}^{k}_{2N+1}\right),
\end{equation}
where $I_{2N+1}$ is the $2N+1$ dimensional identity matrix,
$$
D_{2N+1}=\frac{ \pi}{l}\left(\begin{array}{cccc}
0 & & & \\
& 1 \cdot J & & \\
& & \ddots & \\
& & & N \cdot J
\end{array}\right)\in\mathbb{R}^{(2N+1)\times (2N+1)}$$
and
\begin{equation}
P_{2N+1}=G_{2N+1}(G_{2N+1}^{\top}G_{2N+1})^{-1}G_{2N+1}^T
\end{equation}
with $G_{2N+1}$ be a ${(2N+1)\times3}$ matrix  whose columns  consist of the bases of the $2N+1$ dimensional truncation of the subspace   \eqref{subspace1}.

Similar to the Projection RK integrator in the space $L^2[-l,l]$, we have the following result for the truncated projection RK integrator \eqref{truncatedprojmethod12} concerning about the preservation of the invariants.
\begin{mytheo}\label{truncatedenergypreservation}
  The truncated projection RK integrator \eqref{truncatedprojmethod12} preserves exactly
the truncated mass
\begin{equation}\label{truncatedmass1}
M_{2N+1}=\int_{-l}^l\boldsymbol{\omega}^{\top}_{2N+1}(x) \boldsymbol{u}_{2N+1}\mathrm{~d} x,
\end{equation}
 the truncated momentum
\begin{equation}\label{truncatedmomentum1}
K_{2N+1}=\int_{-l}^l\left(\boldsymbol{\omega}^{\top}_{2N+1}(x) \boldsymbol{u}_{2N+1}\right)^2 \mathrm{~d} x.
\end{equation}
and the truncated energy
\begin{equation}\label{truncatedenergy1}
\begin{aligned}
H_{{2N+1}}&=\int_{-l}^l \dfrac{1}{6}\alpha \left(\boldsymbol{\omega}^{\top}_{2N+1}(x) \boldsymbol{u}_{2N+1}\right)^3-\frac{1}{2}\nu(\boldsymbol{\omega}^{\top}_{2N+1}(x) D_{2N+1}^{\top}\boldsymbol{u}_{2N+1}  )^2 \mathrm{d} x.
\end{aligned}
\end{equation}
\end{mytheo}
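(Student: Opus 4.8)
The plan is to mirror the argument of Lemma \ref{lem:cons}, transported into the finite-dimensional coordinate setting. The decisive fact is that the orthonormality $\int_{-l}^{l}\omega_i\omega_j\,\mathrm{d}x=\delta_{ij}$ turns the $L^2[-l,l]$ inner product of any two elements of $\mathcal{V}_N$ into the Euclidean inner product of their coefficient vectors in $\mathbb{R}^{2N+1}$. Hence $I_{2N+1}-P_{2N+1}$, with $P_{2N+1}=G_{2N+1}(G_{2N+1}^{\top}G_{2N+1})^{-1}G_{2N+1}^{\top}$, represents exactly the $L^2$-orthogonal projection onto the orthogonal complement of the truncated subspace \eqref{subspace1}, whose generators (the columns of $G_{2N+1}$) are the coefficient vectors of the discrete variational derivatives of the three invariants.

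First I would read off from \eqref{truncatedprojmethod12} that the increment $\boldsymbol{u}^{k+1}_{2N+1}-\boldsymbol{u}^{k}_{2N+1}$ lies in the range of $I_{2N+1}-P_{2N+1}$, hence is Euclidean-orthogonal to every column of $G_{2N+1}$. Translating back through the basis, this says precisely that $u^{k+1}-u^{k}$ lies in the discrete tangent space $T_{(u^{k+1},u^{k})}M$, i.e.
\begin{equation*}
\left<\frac{\delta\mathcal{H}_i}{\delta(u^{k+1},u^{k})},\,u^{k+1}-u^{k}\right>=0,\qquad i=1,2,3.
\end{equation*}

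Next I would check that the three generators of \eqref{subspace1} agree, up to nonzero scalar factors, with genuine discrete variational derivatives of the truncated invariants \eqref{truncatedmass1}--\eqref{truncatedenergy1} in the sense of \eqref{dvd}; since only the span enters the projection, orthogonality of the increment to the span is the same as orthogonality to the genuine derivatives. For the mass the derivative is the constant $1$ (coefficient vector $\boldsymbol{e}_1$) by linearity; for the momentum it is $u^{k+1}+u^{k}$, from $a^2-b^2=(a+b)(a-b)$; for the energy it is the third generator itself, from $a^3-b^3=(a-b)(a^2+ab+b^2)$ together with one integration by parts of the $u_x^2$ term. The preservation then follows exactly as in Lemma \ref{lem:cons}: by \eqref{dvd},
\begin{equation*}
\mathcal{H}_i(u^{k+1})-\mathcal{H}_i(u^{k})=\left<\frac{\delta\mathcal{H}_i}{\delta(u^{k+1},u^{k})},\,u^{k+1}-u^{k}\right>=0,\qquad i=1,2,3,
\end{equation*}
and since $M_{2N+1},K_{2N+1},H_{2N+1}$ coincide with the restrictions of $\mathcal{H}_1,\mathcal{H}_2,\mathcal{H}_3$ to $\mathcal{V}_N$ up to harmless positive constants, the three claimed identities follow.

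The step demanding the most care is confirming that the energy discrete variational derivative still satisfies \eqref{dvd} \emph{after} truncation, since its derivation rests on integration by parts applied to $\tfrac{\nu}{2}u_x^2$. In the coordinate picture this integration by parts is encoded by the skew-symmetry $D_{2N+1}^{\top}=-D_{2N+1}$, which holds because $D_{2N+1}$ is block-diagonal with blocks $j\cdot J$ and $J=-J^{\top}$, together with the invariance $D_{2N+1}:\mathcal{V}_N\to\mathcal{V}_N$ (differentiation maps each $\cos/\sin$ pair into its partner). Because both facts hold exactly in the finite-dimensional subspace, and the periodic boundary conditions annihilate the boundary contributions, the truncation introduces no error into the discrete variational derivative, and the coordinate-wise orthogonality enforced by $I_{2N+1}-P_{2N+1}$ is genuinely equivalent to the vanishing of the increments of $M_{2N+1}$, $K_{2N+1}$ and $H_{2N+1}$.
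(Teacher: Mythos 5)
Your proof is correct, and it supplies an argument the paper itself omits: Theorem \ref{truncatedenergypreservation} is stated without proof, the authors appealing only to the phrase ``Similar to the Projection RK integrator in the space $L^2[-l,l]$'', i.e.\ to Lemma \ref{lem:cons}. Your route --- orthonormality of $\{\omega_j\}$ identifying $L^2$ pairings with Euclidean pairings of coefficient vectors, the projector $I_{2N+1}-P_{2N+1}$ forcing the increment $\boldsymbol{u}^{k+1}_{2N+1}-\boldsymbol{u}^{k}_{2N+1}$ to be orthogonal to the columns of $G_{2N+1}$, and the factorizations $a^2-b^2=(a+b)(a-b)$ and $a^3-b^3=(a-b)(a^2+ab+b^2)$ plus one integration by parts identifying those columns with discrete variational derivatives --- is exactly the transport of Lemma \ref{lem:cons} to the truncated setting that the paper intends, so the two approaches coincide in substance.

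One step in your write-up deserves sharper justification, although the conclusion survives. The third generator in \eqref{subspace1} is \emph{not} the genuine energy discrete variational derivative
\begin{equation*}
d_3=\frac{\alpha}{6}\bigl((u^{k+1})^2+u^{k+1}u^k+(u^k)^2\bigr)+\frac{\nu}{2}\bigl(u^{k+1}_{xx}+u^k_{xx}\bigr):
\end{equation*}
its cubic part leaves $\mathcal{V}_N$ (products of the trigonometric basis functions generate frequencies up to $2N$), and the integral $\int_{-l}^{l}\boldsymbol{\omega}_{2N+1}(x)(\cdots)\,dx$ appearing in the third column of $G_{2N+1}$ is by definition the coefficient vector of the $L^2$-orthogonal projection of that cubic part onto $\mathcal{V}_N$. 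So your sentence ``the truncation introduces no error into the discrete variational derivative'' is literally false for this term; your skew-symmetry argument for $D_{2N+1}$ covers only the $u_{xx}$ part, which genuinely does stay in $\mathcal{V}_N$. What rescues the proof is that the increment $u^{k+1}-u^k$ lies in $\mathcal{V}_N$, and for any $f\in L^2[-l,l]$ and $w\in\mathcal{V}_N$ one has $\left<f,w\right>=\left<\Pi_N f,w\right>$, where $\Pi_N$ denotes the $L^2$-orthogonal projection onto $\mathcal{V}_N$; equivalently, the $L^2$ pairing equals the Euclidean product of the \emph{truncated} coefficient vector of $f$ with the coefficient vector of $w$. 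Hence Euclidean orthogonality of the increment to the third column of $G_{2N+1}$ is still equivalent to $\left<d_3,u^{k+1}-u^k\right>=0$, and the telescoping $\mathcal{H}_3[u^{k+1}]-\mathcal{H}_3[u^k]=0$ goes through; the mass and momentum cases need no such care since their derivatives already lie in $\mathcal{V}_N$. With that substitution (a projection argument in place of ``no truncation error''), your proof is complete. (Both you and the authors also tacitly assume the three generators are linearly independent so that $G_{2N+1}^{\top}G_{2N+1}$ is invertible; otherwise one projects onto the span of a maximal independent subset and nothing changes.)
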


In the following, we numerically solve a KdV equation using the truncated projection RK integrator \eqref{truncatedprojmethod12} to confirm the theoretical results.
\begin{problem}\label{twosoliton}
	Consider the interaction of two solitary waves which are modeled by the KdV equation \eqref{kdv} with the initial condition
$$
\begin{aligned}
& u_0(x)=\frac{12}{\left(1+e^{\theta_1}+e^{\theta_2}+a^2 e^{\theta_1+\theta_2}\right)^2}\left[k_1^2 e^{\theta_1}+k_2^2 e^{\theta_2}+ 2\left(k_2-k_1\right)^2 e^{\theta_1+\theta_2}+a^2\left(k_2^2 e^{\theta_1}+k_1^2 e^{\theta_2}\right) e^{\theta_1+\theta_2}\right],
\end{aligned}
$$
where $a^2=\left(\frac{k_1-k_2}{k_1+k_2}\right)^2=\frac{1}{25}, \theta_1=k_1 x+x_1, \theta_2=k_2 x+x_2$. In this problem, we set $k_1=0.4, k_2=0.6, x_1=4,x_2=15.$ This problem is derived from \cite{yanjl2019}. The parameters in the KdV equation are chosen as $\alpha=-1,\nu=-1$ and the solution region as $x \in[-40 ,40]$.
\end{problem}

\begin{figure}
\centering%\tabcolsep=0.3mm
	\begin{tabular}
		[c]{cc}%
		\subfigure[Numerical solution]{\includegraphics[width=4cm,height=4cm]{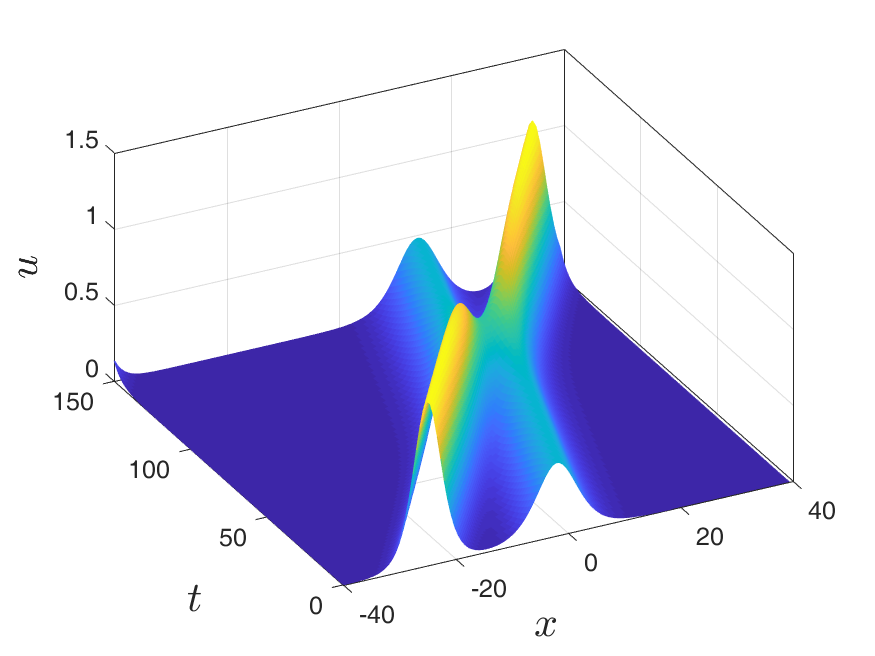}}
		\subfigure[Invariants preservation]{\includegraphics[width=4cm,height=4cm]{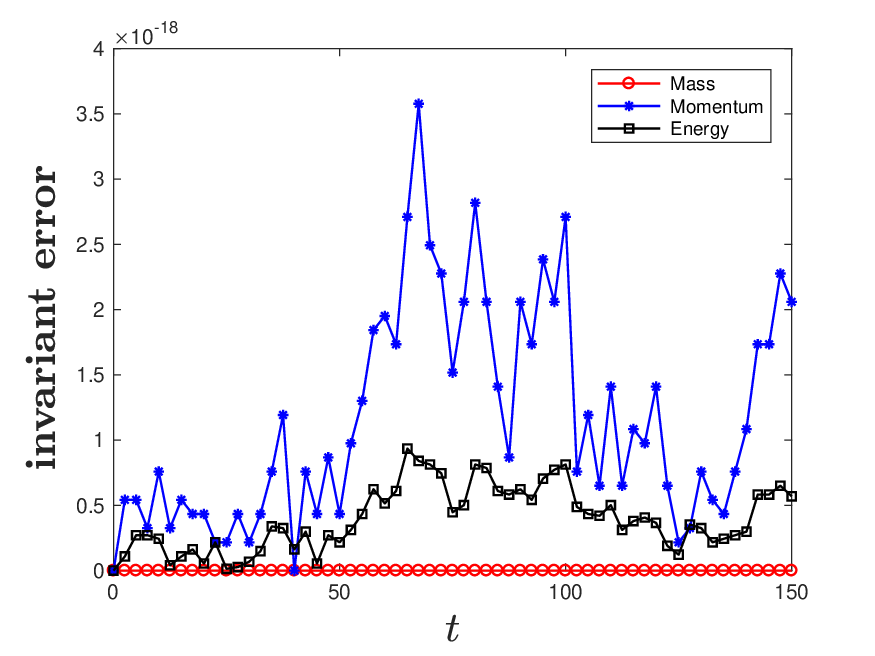}}
        \subfigure[Algebraic order]{\includegraphics[width=4cm,height=4cm]{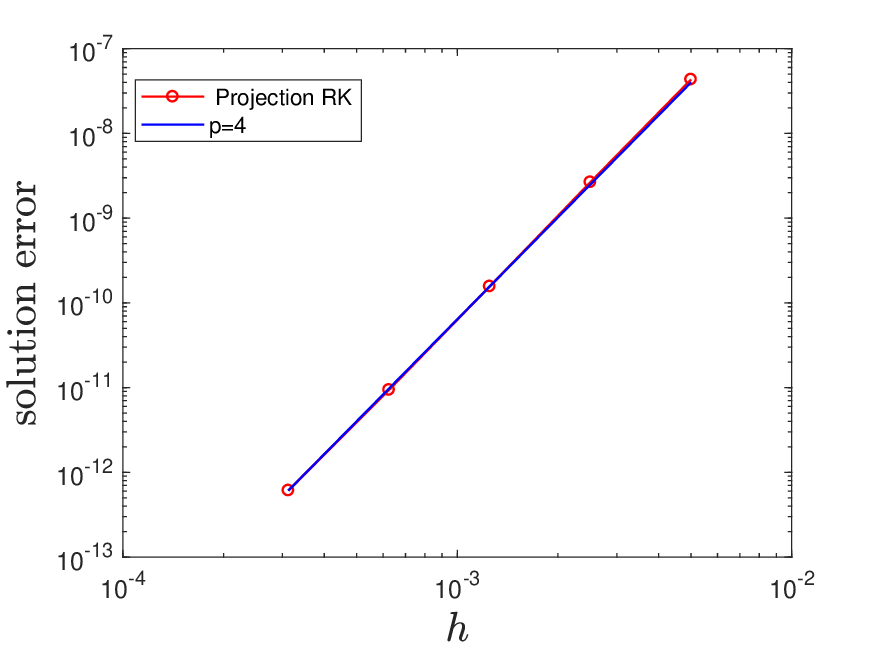}}
	\end{tabular}
	\caption{Numerical results for \textbf {Problem \ref{twosoliton}} with $N=2^6$. (a) the numerical solution obtained by  truncated projection RK  integrator; (b)  preservation of the three invariants; (c) order of convergence.}
	\label{problem2solution}
\end{figure}
We integrate the KdV equation in the interval $t\in[0,150]$ with $N=2^6$ and $h=0.005$.  The  $4$th-order RK integrator with Butcher tableau
$$
\begin{array}{c|cccc}
0 & & & & \\
1 / 2 & 1 / 2 & & & \\
1 / 2 & 0 & 1 / 2 & & \\
1 & 0 & 0 & 1 & \\
\hline & 1 / 6 & 2 / 6 & 2 / 6 & 1 / 6
\end{array}
$$
is chosen as the underlying integrator.
The numerical solution obtained by the truncated projection RK integrator and the preservation of three invariants are shown in Figure \ref{problem2solution}(a)(b). It is observed that the three truncated invariants are preserved by the numerical integrator and the numerical solution is well performed. The two solitary waves move at a constant speed. The velocity of the taller solitary wave is greater than that of the shorter one. At $t=80$, the two solitons overlap, and they are completely apart at $t=120$, having swapped their positions.

Moreover, we integrate the KdV equation in the interval $t\in[0,1]$ using the truncated projection RK integrator with different stepszies. The solution errors versus the stepsizes are displayed in Figure \ref{problem2solution}(c). It can be seen from the numerical result that the projection RK integrator maintains the algebraic order of the underlying RK integrator.

\section{Conclusions and remarks}
In the present paper, by introducing the concept of discrete variational derivative, we obtain a semi-analytical  energy-preserving  discrete variational derivative integrator for Hamiltonian PDEs, which can be viewed as a generalization of  the discrete gradient for Hamiltonian ODEs. Furthermore, semi-analytical multiple invariants-preserving integrators for conservative PDEs are constructed by projection. In this paper,  we focus ourselves on the temporal direction, the obtained integrators are time-stepping. One more step of  a finite-dimension discretization in spatial direction (including suitable approximation to the partial derivatives $\partial_{x}$, $\partial_{xx}$ etc. ) will lead to a full-discretization schemes for the conservative PDEs \eqref{waveq}.
All the analysis in the paper makes perfect sense after  replacing $u^{k}(x)$ by the finite-dimension vector $\mathbf{u}^{k}$ and  the continuous $L^2$ inner product by the  discrete $l^2$ inner product.
This paper offers a new framework for the constructing multiple invariants-preserving integrators for conservative PDEs. The novel approach is conceptually simple, versatile, and helpful for the theoretical analysis of full-discretization energy-preserving schemes.
\section*{Acknowledgements}
The research was supported in part by the Natural Science Foundation of China under Grant 12371403 and 12371433, the National Natural Science Foundation of Jiangsu under Grant BK20200587.
\section*{References}


\begin{thebibliography}{10}

\bibitem{demiray2005}
H.~Demiray.
\newblock A complex travelling wave solution to the {KdV-Burgers} equation.
\newblock {\em Phys. Lett. A}, 344:418--422, 2005.

\bibitem{halim2004}
A.~A. Halim and S.~B. Leble.
\newblock Analytical and numerical solution of a coupled {KdV-MKdV} system.
\newblock {\em Chaos, Solitons Fractals}, 19:99--108, 2004.

\bibitem{Gegenhasi2007}
Gegenhasi and X.~B. Hu.
\newblock {A (2 + 1)-dimensional sine-Gordon equation and its Pfaffian
  generalization}.
\newblock {\em Phys. Lett. A}, 360:439--447, 2007.

\bibitem{JLzhang2003}
J.~L. Zhang and Y.~M. Wang.
\newblock Exact solutions to two nonlinear equations.
\newblock {\em Acta Phys. Sin.}, 52:1574--1578, 2003.

\bibitem{hairer2002}
E.~Hairer, C.~Lubich, and G.~Wanner.
\newblock {\em Geometric Numerical Integration: Structure-Preserving
  Algorithms}.
\newblock Springer-Verlag, Berlin, Heidelberg, 2nd edition, 2006.

\bibitem{McLachlan2002}
R.~I. McLachlan and G.~R.~W. Quispel.
\newblock Splitting methods.
\newblock {\em Acta. Numer.}, 11:341--434, 2002.

\bibitem{Sanz-Serna1992}
J.~M. Sanz-Serna.
\newblock Symplectic integrators for {H}amiltonian problems: an overview.
\newblock {\em Acta Numer.}, 1:243--286, 1992.

\bibitem{Hairer2010EnergyPreservingVO}
E.~Hairer.
\newblock Energy-preserving variant of collocation methods.
\newblock {\em J. Numer. Anal. Ind. Appl. Math.}

\bibitem{Quispel1996-1}
G.~R.~W. Quispel and H.~W. Capel.
\newblock Solving {ODE}s numerically while preserving a first integral.
\newblock {\em Phys. Lett. A}, 218:223--228, 1996.

\bibitem{cie2011}
J.~L. Cie$\acute{s}$li$\acute{n}$ski and B.~Ratkiewicz.
\newblock Energy-preserving numerical schemes of high accuracy for
  one-dimensional {Hamiltonian} systems.
\newblock {\em J. Phys. A: Math. Theor.}, 44:155206, 2011.

\bibitem{dahlby2011}
M.~Dahlby and B.~Owren.
\newblock {A general framework for deriving integral preserving numerical
  methods for PDEs}.
\newblock {\em SIAM J. Sci. Comput.}, 33:2318--2340, 2011.

\bibitem{celledoni2012}
E.~Celledoni, V.~Grimm, R.~I. McLachlan, D.~I. McLaren, D.~O'Neale, B.~Owren,
  and G.~R.~W. Quispel.
\newblock {Preserving energy resp. dissipation in numerical {PDE}s using the
  `Average Vector Field' method}.
\newblock {\em J. Comput. Phys.}, 231:6770--6789, 2012.

\bibitem{Brugnano2013}
F.~Iavernaro and B.~Pace.
\newblock {Line integral methods and their application to the numerical
  solution of conservative problems}.
\newblock {\em math. NA.}, arXiv:1301.2367v1, 2013.

\bibitem{Brugnano2011}
L.~Brugnano, F.~Iavernaro, and D.~Trigiante.
\newblock A note on the efficient implementation of {Hamiltonian} {BVM}s.
\newblock {\em J Comput Appl Math}, 236:375--383, 2011.

\bibitem{Brugnano2009}
L.~Brugnano, F.~Iavernaro, and D.~Trigiante.
\newblock Hamiltonian {BVM}s ({HBVM}s): a family of "drift-free" methods for
  integrating polynomial {Hamiltonian} systems.
\newblock {\em AIP Conf.Proc.}, 1168:715--718, 2009.

\bibitem{chen2001}
Chen.~J. B and Qin.~M. Z.
\newblock {Multi-symplectic Fourier pseudospectral method for the nonlinear
  Schr$\ddot{o}$dinger equation}.
\newblock {\em Electronic Transactions on Numerical Analysis}, 12:193--204,
  2001.

\bibitem{cai2013}
Cai. W, Wang. Y, and Song. Y.
\newblock {Numerical dispersion analysis of a multi-symplectic scheme for the
  three dimensional Maxwell's equations}.
\newblock {\em Journal of Computational Physics}, 234:330--352, 2013.

\bibitem{chen2011}
Y.~Chen, Y.~Sun, and Y.~Tang.
\newblock Energy-preserving numerical methods for {Landau–Lifshitz} equation.
\newblock {\em J. Phys. A: Math. Theor.}, 44:295207, 2011.

\bibitem{gong2014}
Y.~Z. Gong, J.~X. Cai, and Y.~S. Wang.
\newblock Some new structure-preserving algorithms for general multi-symplectic
  formulations of {Hamiltonian PDEs}.
\newblock {\em J. Comput. Phys.}, 279:80--102, 2014.

\bibitem{bridges2006}
T.J. Bridges and S.~Reich.
\newblock Numerical methods for {Hamiltonian} {PDEs}.
\newblock {\em J. Phys. A: Math. Gen.}, 39:5287--5320, 2006.

\bibitem{hongjl2006}
Hong.~J. L, Liu. Y, M.~Hans, and A.~Zanna.
\newblock {Globally conservative properties and error estimation of
  multi-symplectic scheme for Schr$\ddot{o}$dinger equations with variable
  coefficients}.
\newblock {\em Appl. Numer. Math.}, 56:814--843, 2006.

\bibitem{furihata1999}
D.~Furihata.
\newblock Finite difference schemes for $\partial u/\partial
  t=(\partial/\partial x)^a\sigma g/\sigma u$ that inherit energy conservation
  or dissipation property.
\newblock {\em J. Comput. Appl. Math.}, 156:181--205, 1999.

\bibitem{matsuo2001}
T.~Matsuo and D.~Furihata.
\newblock Dissipative or consercative finite-differencr schemes for
  complex-valued nonlinear partial differentiial equations.
\newblock {\em J. Comput. Appl. Math.}, 171:425--447, 2001.

\bibitem{furihata2001}
D.~Furihata.
\newblock Finite-difference schemes for nonlinear wave equation that inherit
  energy conservation property.
\newblock {\em J. Comput. Appl. Math.}, 134:37--57, 2001.

\bibitem{dahlby2011-2}
M.~Dahlby and B.A. Owren.
\newblock A general framework for deriving integral preserving numerical
  methods for {PDEs}.
\newblock {\em SIAM J. sci. comp.}, 33:2318--2340, 2011.

\bibitem{gonz1996}
O.~Gonzalez.
\newblock Time integration and discrete {Hamiltonian} systems.
\newblock {\em J. Nonlinear. Sci.}, 6:449--467, 1996.

\bibitem{Itoh1988}
T.~Itoh and K.~Abe.
\newblock Hamiltonian conserving discrete canonical equations based on
  variational difference quotients.
\newblock {\em J. Comput. Phys.}, 77:85--102, 1988.

\bibitem{Quispel1996-2}
G.~R.~W. Quispel and G.~S. Turner.
\newblock Discrete gradient methods for solving {ODE}s numerically while
  preserving a first integral.
\newblock {\em J. Phys. A: Math. Gen.}, 29:L341--L349, 1996.

\bibitem{McLachlan1999}
R.~I. McLachlan, G.~R.~W. Quispel, and N.~Robidoux.
\newblock Geometric integration using discrete gradients.
\newblock {\em Phil. Trans. R. Soc. London A}, 357:1021--1045, 1999.

\bibitem{quispel2008}
G.~R.~W. Quispel and D.~I. McLaren.
\newblock A new class of energy-preserving numerical integration methods.
\newblock {\em J. Phys. A: Math. Theor.}, 41:045206, 2008.

\bibitem{yanjl2019}
J.L. Yan and L.~H. Zheng.
\newblock { A Class of Momentum-Preserving Fourier Pseudo-Spectral Schemes for
  the Korteweg-de Vries Equation}.
\newblock {\em IAENG Int. J. Appl. Math.}, 49(4):49422, 2019.

\bibitem{korteweg1895}
D.J. Korteweg and G.~de~Vries.
\newblock {On the change of form of long waves advancing in a rectangular
  channel and on a new type of long stationary wave }.
\newblock {\em Philos. Mag.}, 39:422--443, 1895.

\bibitem{thierry2023}
L.~Thierry.
\newblock {Multisolitons are the unique constrained minimizers of the KdV
  conserved quantities }.
\newblock {\em Calc. Var. Partial. Differ. Equ.}, 62( 192, 2023.

\bibitem{gong2021}
Y.Z. Gong, Y.~Chen, C.~W. Wang, and Q.~Hong.
\newblock {A new class of high-order energy-preserving schemes for the
  Korteweg-de Vries equation based on the quadratic auxiliary variable (QAV)
  approach }.
\newblock {\em Numer. Math. Theor. Meth. Appl.}, 15:768--792, 2022.

\bibitem{ascher2004}
U.~M. Ascher and R.~I. McLachlan.
\newblock Multisymplectic box schemes and the {Korteweg de Vries} equation.
\newblock {\em Appl. Numer. Math.}, 48:255--269, 2004.

\bibitem{zabusky1967}
N.J. Zabusky.
\newblock {\em Nonlinear Partial Differential Equations}.
\newblock Academic Press, 1967.

\bibitem{drazin1989}
P.~G. Drazin and R.~S. Johnson.
\newblock {\em Solitons: An Introduction}.
\newblock Cambridge University, New York, 1989.

\bibitem{crighton1995}
D.G. Crighton.
\newblock Applications of {KdV}.
\newblock {\em Acta Appl. Math.}, 39:39--67, 1995.

\bibitem{debnath1998}
L.~Debnath.
\newblock {\em Nonlinear Water Partial Differential Equations for Scientists
  and Engineers}.
\newblock Birkhauser, Berlin, 1998.

\bibitem{Brugnano2109KdV}
L.~Brugnano, G.~Gurioli, and Y.~Sun.
\newblock {Energy-conserving Hamiltonian boundary value methods for the
  numerical solution of the Korteweg-de Vries equation}.
\newblock {\em J. Comput. Appli. Math.}, 351:117--135, 2019.

\bibitem{Barletti2024}
L.~Barletti, L.~Brugnano, G.Gurioli, and F.~Iavernaro.
\newblock { Recent advances in the numerical solution of the nonlinear
  Schr\"{o}dinger Equation}.
\newblock {\em J. Comput. Appli. Math.}, page 115826, 2024.

\bibitem{BrugL2015}
L.~Brugnano, G.F. Caccia, and F.~Iavernaro.
\newblock Energy conservation issues in the numerical solution of the
  semilinear wave equation.
\newblock {\em Appl. Math. Comput.}, 270:842--870, 2015.

\end{thebibliography}
\end{document}